\newtheorem{definition}{Definition}%
\newtheorem{remark}{Remark}%
\newtheorem{lemma}{Lemma}%
\newtheorem{assumption}{Assumption}%
\newtheorem{theorem}{Theorem}%
\newenvironment{proof}{{\it Proof.}  - }{\hfill \flushright  \qed\bigskip\par}
\begin{document}
\begin{frontmatter}

\title{A fuzzy derivative model approach to time-series prediction\thanksref{footnoteinfo}} 

\thanks[footnoteinfo]{Work  financed by  FCT - Funda\c c\~ao para a Ci\^{e}ncia e a Tecnologia under project:  (i)  UIDB/04033/2020 for the first author.   (ii)  UIDB/00048/2020 for the second author.   }

\thanks[footnoteinfo]{Work  financed by  FCT - Funda\c c\~ao para a Ci\^{e}ncia e a Tecnologia under project:  (i)  UIDB/04033/2020 for the first author.   (ii)  UIDB/00048/2020 for the second author.   }
\author[First]{Paulo A. Salgado} 
\author[Second]{T-P Azevedo Perdicoúlis} 
\address[First]{Department of Engineering, ECT \& CITAB, 
  UTAD, Quinta de Prados, 5000-801 Vila Real, Portugal (e-mail: psal@utad.pt) ORCID 0000-0003-0041-0256.}
\address[Second]{Department of Engineering, ECT \& ISR, University of Coimbra, 
 UTAD, Quinta de Prados, 5000-801 Vila Real, Portugal (e-mail: tazevedo@utad.pt) ORCID 0000-0002-3281-5357.}

\begin{abstract}                
This paper presents a fuzzy system  approach to the prediction of nonlinear time-series and dynamical systems. To do this, the underlying mechanism governing a time-series is perceived by a modified structure of a fuzzy system in order to capture the time-series behaviour, as well as the information about its successive time derivatives. The prediction task is carried out by a fuzzy predictor based on the extracted rules and on a Taylor ODE solver. The approach has been applied to a benchmark problem:   the Mackey- Glass chaotic time-series.  Furthermore, comparative studies with other fuzzy and neural network predictors were made and these suggest   equal or even better performance of the herein presented approach.\end{abstract}

\begin{keyword}
Derivative approximation, Fuzzy predictor, Fuzzy system, Identification theory, Taylor ODE, Time-series.
\end{keyword}

\end{frontmatter}

\section{Introduction}
%
A time-series is a discrete sequence of measured quantities $x_{1}, x_{2}, \ldots, x_{n},$  of  some physical system (taken at regular intervals of time) or from human activity data \cite{ref1}. Here, the time-series prediction problem is formulated as a system identification problem, where the system input   is its past values and its desired output   its future values.
Much effort has been devoted over the past several decades to the development and improvement of time-series forecasting models. 
Recently,  has been an increasing interest in extending the classical framework of Box-Jenkins to incorporate nonstandard properties, such as nonlinearity, non-Gaussianity, and heterogeneity \cite{ref4,ref5,ref6}. So far, these solutions present a congenital shortcoming due to their lack of capability to incorporate directly the natural linguistic information in their modelling or in their strategies, or even to extract relevant linguistic information from the data series. 

Moreover, neural networks and fuzzy logic modelling have been applied to the problem of forecasting complex time-series \cite{ref7,ref8,ref9,ref10} with advantages over the traditional statistical approaches \cite{ref11,ref12}, with their major advantage being their flexible nonlinear modelling capability through linguistic rules or weighted neurone network.

From a computational point of view,  fuzzy systems (FS) are inherently nonlinear and have the capability to approximate nonlinear functions \cite{ref14,ref15}.  The key point of the learning mechanism is to show that the combined use of the fuzzy rule-based system with structural learning results is a powerful and efficient tool for the automatic extraction of a set of meaningful rules, as a collection of momentary pictures (shots) of process behaviours \cite{ref16,ref17,ref18,ref19,ref20}. However, the classical FS is based on the belief that there are static linear or non-linear relationships between historical data and future values of a time-series system, here referred as a zero order system. Unfortunately, in many practical situations, the zero order approximation capability of the  FS  is not sufficient to approximate temporal series. In other situations, it will be useful to know the series of derivatives from the original time-series, since the derivative of FS seldom is an approximate of the derivatives' series.

More recently, other works have appeared that aim at approximating time-series using FS  \cite{ref42,2201.02297,TSAUR2005539,CAGCAGYOLCU20168750,8015463}.    Even treating some applications as in \cite{PEREIRA2015395,Castillo}.

This paper  models the time-series data,    exploring the dynamic relationship  between the variables,   using a suitable FS representation.  In particular, the problem of future prediction uses a Disturbed Fuzzy Modelling (DFM) approach, which is a generalised FS capable of approximating regular functions as well as their derivatives on compact domains with linguistic information. Its rules are extracted from available past data or local Taylor series (TS) expansion using a new least-square multivariate rational approximation. This linguistic information is related to the translation process of Fuzzy sets (Fsets) within the fuzzy relationships, which, when modelled, are capable of describing the local trend of the fuzzy models (time or space derivatives). With derivatives' models of time-series, for regions of interest, a TS is able to approximate either a solution of ordinary differential equations (ODE) or time-series in distinct regions of the space. The terms of this TS are now a set of FS  which are derivative functions of the DFM. Such representation is designated as the TS of Fuzzy Functions (TSFF), which are the time-series approximates.

This work is organised as follows: Section~\ref{II} describes the disturbed fuzzy system (DFS); Section~\ref{III}  explains the learning methods to approximate a function and its derivatives by the  DFS. Section~\ref{IV}  presents an ODE Fuzzy   method based on the DFM. Section~\ref{V}  applies the proposed algorithm to  a benchmark problem  and compares the simulation results with other approaches. Finally, conclusions are drawn and some directions for future work are outline in Section~\ref{VI}.

\section{The  disturbed fuzzy system}\label{II}
FS models provide  a framework for modelling complex nonlinear relations  using a rule-based methodology. To do this, consider a system $y = f (x),$  where $y$ is the output (or consequent) variable and $x = \left(x_{1},x_{2}, \ldots, x_{n}\right)^{T} \in \mathbb{R}^{n} $ is the input vector (or antecedent) variable. Let $U =U_{1} \times \cdots \times U_{n}$ be the domain of the input vector $x  \in \mathbb{R}^{n},$  and $V$ the output space.
A linguistic model, relating variables $x$ and $y,$ can be written as a collection of rules that link terms $A_{j_{i}} \in U_{i},$  $i=\overline{1,n},$   
and $B_{j} \in V, $ $j=\overline{1, N,} $  
where $ A_{j}(x) = \left(  A_{j_{1}}(x_{1}), \ldots, A_{j_{i}}(x_{i}), \ldots,  A_{j_{n}}(x_{n})\right)$ and $B_{j}(y)$  represent  the $j$-descriptor sets associated to variables $x$  and $y,$  respectively.    In FS modelling, this relationship is represented by a collection $\mathcal{T}$ of fuzzy IF–THEN rules:
\begin{eqnarray} 
R_{ j}&: &  \text{ IF } x_{1}\in A_{j_{1}}  \text{ and } \ldots  x_{i}\in A_{j_{i}}   \ldots  \text{ and }   x_{n}\in A_{j_{n}}  \nonumber \\ & & \text{ THEN } y  \in B_{ j}  \label{eq1} 
 \end{eqnarray} 
where  the rule index  $j$ belongs to     set $$J = \left\{j | j=\left(j_{1}, \ldots, j_{n}\right),   j_{i}=\overline{1, N_{i}}, i=\overline{1, n}, j=\overline{1,N}\right\}.$$ 

The input space  $U $  
 and the output space $V$ are completely partitioned into $N$   fuzzy regions, where  $N$ fuzzy rules of   form \eqref{eq1} can be defined (for $U,$ $N=\cup_{i=1}^{n} N_{i} $). The rule-base can be represented by the fuzzy relation defined on the Cartesian product $A\times B.$  If each input space $U_{ i}, i=\overline{1,n},$  is completely partitioned into $N_{i}$ Fsets, then there always exists at least one active rule.

 Next, some relevant concepts are introduced.
\begin{definition}[Completeness]\label{def1}
The collection of Fsets  \\
$\left\{ A_{ j}  \right\}_{j=\overline{1,N}}$ in $U$ is said to be complete on $ U$ if
$\forall x \in U,$     $\exists j| A_{ j}  ( x ) > 0 .$
\end{definition}

\begin{definition}[Support]\label{def2}
The   support of a Fset  $A$  on  $U$ is $Supp(A) = \left\{ x \in U | A(x) > 0 \right\}.$
\end{definition}

\begin{definition}[Core]\label{def3}
The core of a Fset  $A$  on  $U $ is  $C ( A ) = \{ x \in U | A ( x ) =1 \} .$
\end{definition}

\begin{definition}[Consistency]\label{def4}
The collection of Fsets  \\
$\left\{ A_{ j}  \right\}_{j=\overline{1,N}}$ in $U$ is said to be consistent if $A_{ j} ( x ) = 1$ for some $x \in U$ implies that  $A_{j'} ( x ) = 0,  \forall j'\neq j .$
\end{definition}

\begin{assumption} \label{assump2}\
\begin{itemize} 
\item[(i)] Fsets $\left\{ A_{ j_{i}} \right\}_{i=\overline{1,n}}$ 
are convex, normal, consistent, and complete in $U_{i};$ 
\item[(ii)] Fsets $A_{ j_{i}}$ are ordered between themselves, i.e., $A_{ j_{ i} }< A_{ j_{ k}},  \forall j_{ i} < j_{ k} ;$ 
\item[(iii)]  $A_{ j_{ i}}$ has a membership function $A_{ j_{ i}} ( x_{ i })$ whose\,\, value is zero outside $\left[ {\bar  x}_{ j i-1} ,  {\bar  x}_{ j i+1 }\right] .$ \\[3pt]
It increases monotonically on $\left[ {\bar  x}_{ j i-1} ,  {\bar  x}_{ j i  }\right] ,$  reaches 1 at  $x_{ i} ={\bar  x}_{ j i  },$ 
decreases monotonically next, and becomes zero at   $x_{ i} ={\bar  x}_{ j i+1 } ;$ 
\item[(iv)] The antecedent of rule $R_{j}$   
is a Fset whose membership function is $A_{j}(x)$ that is obtained by $T-$norm aggregations  $( \star ) $  of $A_{ j_{ i}} ( x_{ i }),$ as explained in \eqref{eq3}. \\[3pt] $A_{ j} ( x_{ j} ) =1$ for $x_{j}={\bar  x}_{ j} =\left( {\bar  x}_{j 1}, \ldots, {\bar  x}_{ j n} \right) \in U$ and zero for regions outside of the multi-dimensional interval 
$\times_{i=1}^{n}\left[ {\bar  x}_{ j i-1} ,  {\bar  x}_{ j i+1 }\right] .$ 
\end{itemize}
\end{assumption}
  Since Fsets $ A _{i,1}, A_{ i,2},\ldots , A_{ i,N} \in U_{i}, i = \overline{1, n},$ use membership functions which are normal, consistent, and complete, thus at least one and at most  $2^{n} $ $ A_{ j} ( x )$ are nonzero for every $j \in J .$
\begin{remark} \label{rem3}
As $\left\{ A_{ j}  \right\}_{j=\overline{1,N}}$ in $U$ are consistent, there is a collection of special points, equal to the corresponding central point of $A_{ j} ( x ) ,$  where only one rule can be fired. We define the collection of these special
points as\\ $S = \{ \bar x _{j} = ( \bar x_{ j 1} , \bar x _{j 2} ,\ldots , \bar x _{j n} ) | j \in J \} .$
\end{remark}

 Given the value for the input variables, $x = x^{*}$, the value of $y$ is calculated as a fuzzy subset $G$ using a fuzzy inference process \cite{ref21}:
\begin{enumerate} 
\item[1.] For every rule $R_{j},$ find its firing level:
\begin{equation}\label{eq3} 
A_{j}(x)=A_{j_{1}}(x_{1})   \star A_{j_{2}}(x_{2})  \star \cdots \star A_{j_{n}}(x_{n}). 
\end{equation}
With the linguistic connective “and” in the antecedent of rule \eqref{eq1} defined as a $T-$norm operation, $\star $,  and $A_{j}$ can be viewed as the Fset ${\times}_{i=1}^{n} A_{j_{i}}$ with membership functions $A_{j_{i}}(x) .$
\item[2.] The fuzzy implication of every rule $R_{ j} : A_{j} \mapsto   B_{ j}$ is a Fset in $U\times V$
which is defined as $ R_{j,A_{j} \mapsto   B_{ j} } ( x , y ) = A _{j} ( x ) \times B_{ j} ( y )$ where 
 ``$\times $'' is an operator rule of fuzzy implication, usually min-max inference or arithmetic inference. For each rule $R_{j},$ calculate the effective output value $G_{j },$ based on sup-star composition
\begin{equation}\label{eq4} 
G_{j} = \sup_{x \in U} \left[A'(x) \star  R_{j,A_{j} \mapsto   B_{ j} } (x,y) \right],  
\end{equation}
where $ \star $  
could be any operator in the class of $T-$norm.  $ A'( x )$ is  generally considered as a singleton set (in the singleton fuzzifier we have $A'(x) =1 $).
\item[3.] Combine the individual outputs of the activated rules to find the overall system output $G.$ It   uses the union of these outputs to get the overall output: \begin{equation}\label{eq5} G= \cup_{j=1}^{N} G_{j}.\end{equation}
For the arithmetic inference process, the output of each  $R_{j},$ $G_{ j}  ( y ) = A_{ j }( x ) \times B_{ j} ( y ) .$
\end{enumerate}
In many situations, e.g. in series prediction and modelling applications, it is desirable to have a crisp value $y^{*}$ for the output of a FS  instead of a fuzzy value $G ( y ) .$  This process is accomplished by a defuzziﬁcation mechanism that performs a mapping from the Fsets in $V$ to crisp points that are also in $V.$ In this paper, a center-average defuzzifier  \cite{ref21} is used and the FS output expression is 
\begin{equation}\label{eq6} 
g(x) = \dfrac{\sum_{j=1}^{N}A_{j}(x) \theta_{j} }{\sum_{j=1}^{N}A_{j}(x)},
\end{equation}
where $\theta_{j}$ is the centroid point in $V$ for which the membership function $B_{ j} ( y )$ achieves its maximum value,  assuming that $B_{ j }( y )$ is a normal Fset, i.e., $B _{j }( \theta_{ j} ) =1 .$

\begin{remark}\label{rem1}
As the  Fset collection $\left\{ A_{ j}  \right\}_{j=\overline{1,N}}$ in \eqref{eq6}  is complete for every $x\in U,$ 
 the  denominator is always nonzero.
\end{remark}
\subsection{Design of FS}

\begin{description} 
\item[Step 1] For each input$-i,$ define   Fsets $ A_{ i,j} \in U_{i} ,$ $i = \overline{1, n },$ $j = \overline{1, N },   $  using membership functions
 which are normal, consistent, and complete.   
   From the combinatorial
aggregation of these Fsets, results $N=\Pi_{i=1}^{n} N_{i}$ multidimensional Fsets 
 $A_{j}(x) = \left\{ A_{ j1}(x _{1}), A_{ j2}(x_{ 2}),\ldots  , A_{ jn}(x_{ n})  \right\},$ with a central point ${\bar x}_{j}.$ 
\item[Step 2] Construct $N$ fuzzy IF-THEN rules of  form \eqref{eq1}, where   Fset $B_{ j},   j \in J ,$  and $ \theta_{j}$ is chosen as  
$\theta_{j} =f ({\bar x}_{j}). $
\item[Step 3]  Construct the FS $g (x)$ from the $N$ rules using product inference engine, the  singleton fuzzifier, and the centre average defuzzifier of the form \eqref{eq6}.
\end{description}

\subsection{Disturbed FS}

Fuzzy identification systems are able to integrate information from different sources, namely from human experts and   experimental observation, expressing  knowledge by linguistic IF $\ldots$ THEN rules. However, this translation process of the knowledge into the linguistic IF-THEN rules is made as a static or instantaneous picture of the modelled process, where the dynamical information is discarded. The result is a FS  able to approximate the process transfer function but not of modelling the derivatives' information. 

The state variables of a dynamic process or of a time-series are not static, because in each instant they possess an instantaneous value and a trend of evolution. The evolution trend,  whose information is contained in the derivatives, must also be modelled by the FS. A simple way to accomplish this is to add to each Fset an input and output movement trend, here named as  disturbed trends. The main difference between the traditional Fset and the  disturbed Fset lies mainly in the fact that the first is only characterised by its static position and shape while the second has the potential to contain  in its structure also the  velocity, acceleration, etc., of its trends.
In this work, the  DFS reflects a natural trend of FS  for modelling time-series. So, the time-series higher order trends can be modelled by increasing the liberty degrees of the Fsets by a set of transformation operations, for example as result of either its translation in space or a deformation shape, or even both. In the context of this paper, we are concerned with a special type of disturbed Fset: the translation and the additive disturbed transformation.

\begin{definition}[nonlinear translation]\label{def5}
The nonlinear translation of a Fset $A$ on $U$ by $h \in U ,$ denoted $A_{ h},$ is the   Fsubset of $U$ defined   as  $A_{h} (x) = A( x -\sigma_{x} (h) ),$   where 
$\sigma_{x} (h)$ is a nonlinear homogeneous translation function of the disturbance variable $h,$ i. e., $\lim_{h \to 0} \sigma_{x}(h) =0.$  
\end{definition}
Moreover, its values are limited, $  \sigma_{x} (h) \le \dfrac{1-A(x)}{A(x)},$ in order to preserve normality of $A_{h}(x), x \in C(A).$ 
For convenience of representation sake, we consider  $ \sigma_{x} (h) := \sigma  (x,h) $ and $A_{h}(x) := A(x,h). $
Disturbance $h $ moves   Fset $ A $ from its natural position to another position in the neighbourhood. As a special and well-known case we have  that $ \sigma_{x}  (h) = h .$ 

\begin{definition}[additive disturbance]\label{def6}
Let $\rho (x,h)$ be an additive disturbance function such that $ \rho (x,h) =  \sigma_{x}  (h) A(x).$    The additive disturbed Fset of $A$ is $A_{h}(x) = A(x) \varphi (x,h),$ where $ \varphi (x,h) = 1 +  \sigma_{x} (h). $
\end{definition}

 Both previously defined disturbed Fsets obey the following lemma.
\begin{lemma}\label{lem1} 
$A_{h}(x), h \in \mathbb{R}^{n},$ is the disturbed membership function of Definition~\ref{def5}  and \ref{def6}, that is continuous with respect to  $ h,$ i.e., $\lim_{h \to 0} \left\|  A_{h}(x) -A(x)\right\| = 0.$
\end{lemma}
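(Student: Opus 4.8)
The plan is to verify the limit statement separately for each of the two constructions, since in both cases the disturbed membership function is obtained from $A(x)$ by an algebraic operation that reduces to the identity as $h \to 0$. The single structural fact driving everything is the homogeneity hypothesis in Definition~\ref{def5}, namely $\lim_{h\to 0}\sigma_x(h)=0$ (equivalently $\lim_{h\to 0}\sigma(x,h)=0$), which should be assumed to hold uniformly in $x$ on the compact support under consideration so that the norm convergence is genuine and not merely pointwise.

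First I would treat the additive disturbance of Definition~\ref{def6}. Here $A_h(x)=A(x)\varphi(x,h)$ with $\varphi(x,h)=1+\sigma_x(h)$, so directly
\begin{equation*}
A_h(x)-A(x) = A(x)\bigl(\varphi(x,h)-1\bigr) = A(x)\,\sigma_x(h).
\end{equation*}
Since $0\le A(x)\le 1$, we get $\|A_h(x)-A(x)\| \le \|\sigma_x(h)\| \cdot \sup_x A(x) \le \|\sigma_x(h)\|$, and the right-hand side tends to $0$ as $h\to 0$ by the homogeneity property. This case is essentially immediate.

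Next I would treat the nonlinear translation of Definition~\ref{def5}, where $A_h(x)=A(x-\sigma_x(h))$. Write
\begin{equation*}
A_h(x)-A(x) = A\bigl(x-\sigma_x(h)\bigr) - A(x).
\end{equation*}
The argument $x-\sigma_x(h) \to x$ as $h\to 0$ by homogeneity, so this is a continuity-of-$A$ argument: invoking Assumption~\ref{assump2}(iii)--(iv), each $A_{j_i}$ (hence each aggregated $A_j$, $T$-norms being continuous) is continuous and supported on a bounded interval, therefore uniformly continuous on $U_i$; given $\varepsilon>0$ pick $\delta$ from uniform continuity and then pick a neighbourhood of $h=0$ on which $\|\sigma_x(h)\| < \delta$, giving $\|A_h(x)-A(x)\| < \varepsilon$.

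The main obstacle is purely one of hypotheses rather than of technique: the clean norm statement requires that the convergence $\sigma_x(h)\to 0$ be uniform in $x$ and that $A$ be (uniformly) continuous, and one must also be careful that the normality-preserving bound $\sigma_x(h)\le (1-A(x))/A(x)$ is compatible with letting $h\to 0$ (it is, since the right side is nonnegative and the bound only constrains how large the disturbance may be). If only pointwise hypotheses are available, the conclusion should be read in the pointwise sense $|A_h(x)-A(x)|\to 0$ for each fixed $x$; I would state explicitly which reading is intended and carry the argument through accordingly. Everything else is the two short displays above.

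\hfill\qed
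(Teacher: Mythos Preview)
Your proposal is correct and follows the same route as the paper---namely, reading off continuity in $h$ directly from Definitions~\ref{def5} and~\ref{def6} together with the homogeneity condition $\lim_{h\to 0}\sigma_x(h)=0$. The paper's own proof is a single sentence asserting exactly this (``it follows directly that $A_h(x)$ is continuous in $h$ on the $Supp(A)$''), so your case-by-case treatment and discussion of the uniform-versus-pointwise distinction are more detailed than what the authors provide, but not a different argument.
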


\begin{proof} 
From Definition~\ref{def5}  and \ref{def6}, is follows directly that $ A_{h}(x)$ is continuous  in $h$ on the $Supp(A)$.
\end{proof}
Remember that $A(x)$ is a combination of the component  membership functions   based on $T-$norm $\star.$  Then, its disturbed couterpart  functions are $A_{ h,j}(x) = A_{ h,j_{1}}(x_{ 1}) \star \cdots \star A_{ h,j_{n}}(x_{ n}) .$  
Moreover, for the arithmetic product $T-$norm operation and disturbances of additive type,  we have:  $A_{h,j}(x)= A_{j}(x) \varphi_{j}(x,h),$ where $ \varphi_{j}(x,h) =  \varphi_{h_{1},j_{1}}(x_{1},h_{1}) \ldots \varphi_{h_{n},j_{n}}(x_{n},h_{n}) $ and $h = \left( h_{1}, \ldots, h_{n}\right)^{T}$ is the disturbance vector.  
Consequently, the fuzzy relationships that involve   Fset A are also disturbed, and this  reflects on  FS. The result 
 is also a DFS that is equal to the static FS when the disturbance variables $h$ are null.

\begin{definition}\label{def7}
A DFS results from the disturbance of the input and   output Fsets of   \eqref{eq6}. Let the input Fsets of rules be of additive type, i.e, $A_{h,j}(x) = A_{ j}(x)\varphi_{j} (x, h),$ and the output Fset of rules of nonlinear translation type, i.e., $B_{ h,j}(y)= B_{j}(y+\sigma_{xj} (h)) .$ Hence, the DFS \eqref{eq6}  is:
\begin{equation}\label{eq7} 
g(x,h) = \dfrac{\sum_{j=1}^{N}A_{h,j}(x) \left( \theta_{j}+\sigma_{xj}(h) \right)}{\sum_{j=1}^{N}A_{h,j}(x)}.  
\end{equation}
\end{definition}
The next objective is to prove that whenever  $f\in {\cal C}^{\nu}(U)$ with  $ U \in  \mathbb{R}^{n}$  a compact set   that is completely partitioned into $N$ Fsets, then for an arbitrary   $ \varepsilon > 0$ there exists a DFS $g (x , h)$  \eqref{eq7}  that approximates $f (x)$ up to the $\nu$th order derivative if 
$\displaystyle \sum_{x \in Supp(f) } \left\| \dfrac{\partial^{i} f(x)}{\partial x^{i}  }  -  \displaystyle \lim_{h \to 0} \dfrac{\partial^{i} g(x,h)}{\partial h^{i}  }   \right\| < \varepsilon ,$ $i=\overline{0,\nu}.$  
That is, the DFS of type \eqref{eq7} is the $\nu$th order   approximate. 

\begin{assumption} \label{assump1}
The  FS on $U$ is given by \eqref{eq6} and its disturbed counterpart by \eqref{eq7}.
\end{assumption} 

The next step is to propose  a method to design a FS    that claims this  property 
and then study  the accuracy of the approximate.

Further on,  adopting the previous notation, concepts/ definitions,   the following assumptions are in place.

\section{Sufficient condition for a DFS as a  derivatives' approximate}\label{III}
The purpose of  this section is to  prove a sufficient condition for  DFS as a universal approximate of a real continuous and differentiable function up to the $\nu$th order derivative.  Before starting, some notation   and definitions are in place.   

 $\mathbb{R}^{d }$ is the $d-$dimensional Euclidean space.   Vectors are represented in bold  font. $\mathbb{Z}_{+}^{n}$ is the set of all non-negative multi-integers. 

 $\boldsymbol \alpha = \left(   \alpha_{1}, \ldots , \alpha_{d} \right) $ is a multi-index where $\alpha_{j}$ are nonnegative integers. Also $ |\boldsymbol \alpha | =  \alpha_{1}+ \cdots + \alpha_{d}$ and $\boldsymbol{  \alpha } !=  \alpha_{1}! \ldots  \alpha_{d}!  $    
 
 Let $\alpha $ and $\beta $ be two multi-indices. If $ \beta_{k} \ge \alpha_{k}, \forall k=\overline{1,d},$ then $\boldsymbol{  \beta} \ge \boldsymbol{ \alpha }.$   $\boldsymbol x \in \mathbb{R}^{n}$ and $\boldsymbol x = \sum_{i=1}^{n} |x_{i}|.$  Then, 
for $ d = ( d_{ 1} , d_{ 2} ,\ldots , d_{ n} )^{T}  \in \mathbb{Z}_{+}^{n} ,$  let $ \boldsymbol{x}^{d}   = x _{1}^{ d_{ 1}} \ldots  x_{ n}^{ d_{ n}} ,$ and for a smooth function $f $ on  $\mathbb{R}^{n},$ let  
  $f^{(d)}(x) = \dfrac{\partial^{d} f(x)}{\partial x^{d}} = \dfrac{\partial^{d} f (x) }{  \partial x _{1}^{ d_{ 1}} \ldots \partial x_{ n}^{ d_{ n}}  },$ with $ \sum_{i=1}^{n} d_{i} =d, $ be the  partial $d$th order derivative of $f.$ 
  Given an open set $ U \subset \mathbb{R}^{n} ,$ let $f \in \cal{C}^d ( U )$   be the set   of functions with its first $\boldsymbol k$   partial derivatives  continuous  in  $U,\boldsymbol  k\in \mathbb{Z}_{+}^{n} ,  \boldsymbol{ k} \le   \boldsymbol{d }.$   The $\delta -$neighbourhood  centered in  $\boldsymbol { \tilde x } \in U $ is defined as  
  \begin{equation}\label{neighbourhood} 
  N_{\delta} = \left\{ \boldsymbol x =     \boldsymbol  {\tilde x} +
   { \boldsymbol  h} : \left\|   \boldsymbol  h  \right\|<  \delta  ,   \boldsymbol x, \boldsymbol { \tilde x }  \in U 
  \right\} .
  \end{equation}
The multivariate polynomial $T$ of degree $  \boldsymbol  t$ defined on a compact set $ U$ can be expressed as: 
\begin{eqnarray}
T_{  \boldsymbol  t}( \boldsymbol x) &= &\sum_{d_{1}=0}^{t_{1}} \sum_{d_{2}=0}^{t_{2}} \cdots \sum_{d_{n}=0}^{t_{n}} z_{d_{1},d_{2},\ldots , d_{n}} h_{1}^{d_{1}} h_{2}^{d_{2}} \ldots h_{n}^{d_{n}}\label{eq9}  \\
&= & \sum_{|d | \ge 0}^{t}z_{ \boldsymbol d}(\boldsymbol x) {  \boldsymbol h}^{ \boldsymbol d}, \nonumber
\end{eqnarray}
with $\sum_{i=1}^{n} t_{i}=    t,   \boldsymbol x= \left(x_{1}, x_{2}, \ldots, x_{n}\right)^{T}.$ 
 Also $$ z_{d_{1},d_{2},\ldots , d_{n}}   ( \boldsymbol x)  = \dfrac{1}{d_{1}! d_{2}! \ldots d_{n}!} \dfrac{ \partial^{d} f (\boldsymbol x)   }{\partial x_{1}^{d_{1}} \ldots \partial x_{n}^{d_{n}}}= \dfrac{1}{d!} \dfrac{ \partial^{d} f (\boldsymbol x)   }{\partial x^{d} }.$$


The design of static FS $g ( x ) $ by choosing the appropriate partition of the input space, the shape of the membership function and its position in the input space $U,$ as well as in the output space $V,$ is relevant to  approximate   function $f ( x ) .$ The derivative information could be included in the fuzzy modelling by associating the potential disturbance of its membership function. Without loss of generality, we   consider   the additive disturbance  function $ \varphi_{j} ( x, h ) = \varphi_{j}  ( h )$ to be independent of variable $ \boldsymbol x. $ Furthermore, it is assumed that the disturbance functions are approximated by multivariate polynomials of the multivariate variable disturbance  $ \boldsymbol  h.$

\begin{definition}\label{def8}
Let the disturbance functions $\sigma_{j}( h )$ and $\varphi_{j}( x, h )$ be as in  Definition~\ref{def5} and \ref{def6}, respectively, in a form of multivariate polynomials of degree $ \boldsymbol r$ and  $ \boldsymbol s$  (with $ \boldsymbol s\le  \boldsymbol r$), respectively, defined on a compact set $U  \subset \mathbb{R}^{n} ,$  i.e.:
\begin{eqnarray}
\hspace*{-0.2cm}Q_{s,j} ( \boldsymbol  h) = \varphi_{ j}   ( \boldsymbol  h)= \sum_{d_{1}=0}^{s_{1}}  \cdots \sum_{d_{n}=0}^{s_{n}} a^{j}_{d_{1}, \ldots , d_{n}} h_{1}^{d_{1}}   \ldots h_{n}^{d_{n}}, \label{eq10}  \\
\hspace*{-0.2cm}P_{r,j} ( \boldsymbol  h) = \theta_{ j} + \sigma  ( \boldsymbol  h)= \sum_{d_{1}=0}^{r_{1}} \! \!  \cdots \! \!  \sum_{d_{n}=0}^{r_{n}} b^{j}_{d_{1}, \ldots , d_{n}} h_{1}^{d_{1}}   \ldots h_{n}^{d_{n}} ,\label{eq11} 
\end{eqnarray}
where $\sum_{i=1}^{n} r_{i} =   r, \sum_{i=1}^{n} s_{i} =    s,$  $a^{j}_{0,\ldots,0}=1,$ $b^{j}_{0,\ldots,0}=\theta_{j}, j=1,\ldots, N.$
DFS \eqref{eq7}  becomes now  a rational function  of variable h:

\begin{equation}\label{eq12} 
g( {\boldsymbol  x}, {\boldsymbol  h}) =\dfrac{\sum_{ j=1}^{N} A_{j} (\boldsymbol  x) Q_{   s,j}(\boldsymbol  h)P_{   r,j}(\boldsymbol  h) }{\sum_{ j=1}^{N} A_{j} (\boldsymbol  x) Q_{s,j}(\boldsymbol  h)}.
\end{equation}
\end{definition}

\begin{remark}\label{rem4}
The numerator of \eqref{eq12} is a weighted sum of polynomials of maximum order $   r +    s$ while the denominator is a weighted sum of polynomials of maximum order $s.$ The total number of parameters of polynomials is $N \times ( r + s ) .$ 
\end{remark}

\begin{remark}\label{rem5}
The $i $th partial derivative of the disturbed functions of Definition~\ref{def8} when $h \to 0$ can be calculated iteratively using rules :
 
\begin{equation}\label{eq13} 
\dfrac{1}{i!} g^{(i) } ( {\boldsymbol  x}) = \dfrac{ \sum_{j=1}^{N} A_{j}  ( {\boldsymbol  x})    c_{i}^{j}}{ \sum_{j=1}^{N} A_{j}  ( {\boldsymbol  x}) }  - \sum_{|k|=0}^{|i|-1}  \dfrac{ \sum_{j=1}^{N} A_{j}  ( {\boldsymbol  x})    a_{ {\boldsymbol i}-{ \boldsymbol k}}^{j}} { \sum_{j=1}^{N} A_{j}  ( {\boldsymbol  x}) } \dfrac{1}{{\boldsymbol k}!} g^{(k)}  ( {\boldsymbol  x})
\end{equation}
where $ g^{(i)}  ( {\boldsymbol  x}) = \displaystyle \lim_{h \to 0} g^{(i)} ( {\boldsymbol  x}, {\boldsymbol  h} ) ,$   $c_{i}^{j} = \sum_{ {\boldsymbol  k} \le  {\boldsymbol  i}}  
a_{ {\boldsymbol i}-{ \boldsymbol k}}^{j} 
b_{{\boldsymbol k}}^{j}$.   
\end{remark}

From Assumption~\ref{assump2} and Remark~\ref{rem4}, for the points $  { \bar{ \boldsymbol  x} }_{j} $ we have that $g^{(i)}  ( \bar{\boldsymbol  x}_{j}) = b_{i}^{j}, j=\overline{1,  N}, i=\overline{1,  r}. $

We   investigate a sufficient condition for DFS to approximate   $f ( x ) ,$ defined on a compact domain $U,  $   up to a given tolerance $\varepsilon  > 0.$ To do this, first,  we establish  a sufficient condition for the DFS to approximate any real polynomial defined on $U$ (Theorem~\ref{th1}). Then,    combining all the sufficient conditions with the Weiertrass Approximation Theorem, we also obtain sufficient conditions for the DFSs to approximate    $f ( x )$ and its successive derivatives (Theorem~\ref{th2}).

\begin{theorem}\label{th1} 
Let   DFS $g( {\boldsymbol  x}, {\boldsymbol  h}) $ be as in equation~\eqref{eq12}.  It can approximate exactly  any $N$ distinct polynomials of order $r \le \nu  ,$ $T_{r, {\bar{\boldsymbol  x}}_{j}} ( {\boldsymbol  h}) = \sum_{| {\boldsymbol  d} | =1}^{ r} z_{\boldsymbol  d} \left(  \bar{\boldsymbol  x}_{j} \right) {\boldsymbol  h}^{\boldsymbol d},$ in $N$ distinct nodes $ {\bar{\boldsymbol  x}}_{j}  \in S,$ and also  the $i$th derivative of $g( \bar{\boldsymbol  x}_{j}, {\boldsymbol  h}) ,$  with respect to     $h,$    can approximate the $i $th derivative of   $T_{r, {\bar{\boldsymbol  x}}_{j}} ( {\boldsymbol  h}), i,r=\overline{0,\nu},$ i.e., 
\begin{enumerate} 
\item[(i)] Define $E_{0}^{(i)} \left(  \bar{\boldsymbol  x}_{j}\right) := \lim_{ h \to 0 } E_{h}^{(i)}  \left(  \bar{\boldsymbol  x}_{j}\right)  =0. $ 
 \item[(ii)] 
 \begin{eqnarray*}
\left| E_{0}^{(i)} \left(   {\boldsymbol  x} \right) \right| &=&  \left|  
 \dfrac{\partial  ^{i} T_{r, {\boldsymbol  x} } ( {\boldsymbol  h})}{\partial {\boldsymbol  h}^{i} }   
    -  \displaystyle \lim_{h \to 0}   \dfrac{\partial^{i} g( {\boldsymbol  x}, {\boldsymbol  h})  }{\partial {\boldsymbol  h}^{i} }   \right|  \\
    &&  \le \left|  
 \dfrac{\partial  ^{i+1} T_{r,  {\boldsymbol  x} } ( {\boldsymbol  h}),}{\partial {\boldsymbol  h}^{i+1} }   
     \right|_{\infty}  {\boldsymbol  h}, i, =\overline{0,\nu}. 
      \end{eqnarray*}
 \end{enumerate}
 \end{theorem}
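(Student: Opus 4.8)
The plan is to exploit the consistency of the Fset partition (Definition~\ref{def4}, Remark~\ref{rem3}) to collapse the DFS at the special nodes $\bar{\boldsymbol x}_{j}\in S$ onto a single local polynomial, and then to carry this exact interpolation to a neighbourhood by combining convexity of the fuzzy weights with a Taylor--remainder estimate.

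First I would fix the design of the disturbance polynomials in \eqref{eq11}: for each rule $j$ take $b^{j}_{\boldsymbol d}=z_{\boldsymbol d}(\bar{\boldsymbol x}_{j})$ for $|\boldsymbol d|\ge 1$ and $b^{j}_{0,\dots,0}=\theta_{j}=f(\bar{\boldsymbol x}_{j})$, so that $P_{r,j}(\boldsymbol h)=\theta_{j}+T_{r,\bar{\boldsymbol x}_{j}}(\boldsymbol h)$. Evaluating \eqref{eq12} at $\boldsymbol x=\bar{\boldsymbol x}_{j}$: by consistency only rule $j$ is active, $A_{j}(\bar{\boldsymbol x}_{j})=1$ and $A_{j'}(\bar{\boldsymbol x}_{j})=0$ for $j'\neq j$, so the factor $Q_{s,j}(\boldsymbol h)$ cancels between numerator and denominator and $g(\bar{\boldsymbol x}_{j},\boldsymbol h)=P_{r,j}(\boldsymbol h)=\theta_{j}+T_{r,\bar{\boldsymbol x}_{j}}(\boldsymbol h)$ identically in $\boldsymbol h$. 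Two polynomials agreeing on a neighbourhood of the origin have equal partial $\boldsymbol h$-derivatives there, hence $\lim_{h\to 0}\partial^{i}g(\bar{\boldsymbol x}_{j},\boldsymbol h)/\partial \boldsymbol h^{i}$ equals $\partial^{i}T_{r,\bar{\boldsymbol x}_{j}}(\boldsymbol h)/\partial\boldsymbol h^{i}$ at $\boldsymbol h=0$ for every $i=\overline{0,\nu}$; equivalently $g^{(i)}(\bar{\boldsymbol x}_{j})=b^{j}_{i}$, as already observed after Remark~\ref{rem5}. This is precisely statement~(i), $E_{0}^{(i)}(\bar{\boldsymbol x}_{j})=0$.

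For (ii) I would take a general $\boldsymbol x$ lying in the neighbourhood $N_{\delta}$ of its active nodes and use the recursion \eqref{eq13}: an induction on $|i|$ shows that $\tfrac{1}{i!}g^{(i)}(\boldsymbol x)$ is the $A_{j}(\boldsymbol x)$-weighted average of the local Taylor data $z_{i}(\bar{\boldsymbol x}_{j})$ over the active rules (the correction sum in \eqref{eq13} being resolved by the inductive hypothesis together with $a^{j}_{0,\dots,0}=1$), the weights being nonnegative and, by completeness (Remark~\ref{rem1}), summing to a nonzero quantity. On the other hand $\partial^{i}T_{r,\boldsymbol x}(\boldsymbol h)/\partial\boldsymbol h^{i}$ at $\boldsymbol h=0$ equals $i!\,z_{i}(\boldsymbol x)$. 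Thus $E_{0}^{(i)}(\boldsymbol x)$ is a convex combination, over the active rules, of the increments $i!\,[z_{i}(\boldsymbol x)-z_{i}(\bar{\boldsymbol x}_{j})]$; a first-order Taylor (mean-value) estimate applied to the polynomial $T_{r,\boldsymbol x}$ bounds each increment by $\|\partial^{i+1}T_{r,\boldsymbol x}(\boldsymbol h)/\partial\boldsymbol h^{i+1}\|_{\infty}\,\|\boldsymbol x-\bar{\boldsymbol x}_{j}\|$, and by Assumption~\ref{assump2}(iii)--(iv) the active nodes of $\boldsymbol x$ lie inside the multi-dimensional support interval, so $\|\boldsymbol x-\bar{\boldsymbol x}_{j}\|$ is dominated by the displacement $\boldsymbol h$ on the right-hand side of~(ii). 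Since a convex combination preserves the bound, (ii) follows.

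The main obstacle is the inductive step identifying $g^{(i)}(\boldsymbol x)$ with a genuine weighted average of the local Taylor coefficients: away from the nodes $Q_{s,j}(\boldsymbol h)$ does not cancel, so one must track the correction sum $\sum_{|k|=0}^{|i|-1}(\cdots)$ in \eqref{eq13} and verify that, under the chosen coefficients $a^{j}_{\boldsymbol d}$, it either telescopes or contributes only terms that are absorbed into the stated $(i+1)$th-derivative bound. A secondary point, needed for a clean statement, is the multi-index bookkeeping for the mixed partials $\partial^{i}/\partial\boldsymbol h^{i}$ and making explicit that the scalar ``$\boldsymbol h$'' on the right of~(ii) is to be read as $\|\boldsymbol x-\bar{\boldsymbol x}_{j}\|$ (equivalently bounded by the radius $\delta$ of $N_{\delta}$ in~\eqref{neighbourhood}), so that the estimate is dimensionally and notationally consistent.
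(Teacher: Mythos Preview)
Your argument for~(i) is essentially the paper's: set $b^{j}_{\boldsymbol d}=z_{\boldsymbol d}(\bar{\boldsymbol x}_{j})$ so that $P_{r,j}$ coincides with the local Taylor polynomial, use consistency at the nodes to collapse the DFS onto a single rule, and read off $g^{(i)}(\bar{\boldsymbol x}_{j})=b^{j}_{i}$. Your direct cancellation of $Q_{s,j}$ at $\bar{\boldsymbol x}_{j}$ is in fact a slightly cleaner route to the same endpoint than the paper's appeal to the recursion~\eqref{eq13}.

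For~(ii) your skeleton also matches the paper---rewrite $E^{(i)}_{0}(\boldsymbol x)$ via \eqref{eq13}/\eqref{eq14} as a $p_{j}$-weighted combination of increments $b^{j}_{i}-f^{(i)}(\boldsymbol x)$ plus a correction, then bound the increment by a first-order remainder---but the mechanism you propose for neutralising the correction sum does not work. That sum does \emph{not} telescope under the inductive hypothesis and $a^{j}_{0}=1$ alone: because the weights $p_{j}(\boldsymbol x)$ mix across rules, the inner factor $b^{j}_{k}-\tfrac{1}{k!}g^{(k)}(\boldsymbol x)$ is generically nonzero even after assuming the result at lower order. The paper sidesteps this entirely: it keeps the correction term explicit in the error identity~\eqref{eqth1}, observes that the $a^{j}_{r}$ are free design parameters chosen to minimise the error, and then takes the worst case $a^{j}_{r}=0$ for $|r|\ge 1$. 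With that choice the correction sum vanishes identically, $\tfrac{1}{i!}g^{(i)}(\boldsymbol x)$ becomes exactly the $p_{j}$-average of the $b^{j}_{i}$ you wanted, and the bound follows from the standard fuzzy-approximation estimate on $\sum_{j}p_{j}(\boldsymbol x)\bigl(b^{j}_{i}-f^{(i)}(\boldsymbol x)\bigr)$. So the ``main obstacle'' you flag is resolved not by tracking or telescoping the correction, but simply by degenerating $Q_{s,j}$; your attempted induction should be replaced by that one-line choice.
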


\begin{proof} 
(i)  Let $T_{r, {\boldsymbol  x}} ( {\boldsymbol  h}) = \sum_{| {\boldsymbol  d} | =1}^{\nu} z_{\boldsymbol d} \left(  {\boldsymbol  x}  \right) {\boldsymbol  h}^{\boldsymbol  d},$ the polynomial in variable $ {\boldsymbol  h}= {\boldsymbol  x}- \bar{\boldsymbol  x}_{j},   \bar{\boldsymbol  x}_{j}\in S. $ Then $\partial {\boldsymbol  h}= \partial {\boldsymbol  x}.$
The aim is to find polynomials $Q_{s,j} ( \boldsymbol  h), P_{r,j} ( \boldsymbol  h) $ as in  \eqref{eq12} to guarantee the approximate of $T_{r, {\boldsymbol  x}} ( {\boldsymbol  h}).$

 Let $\bar{\boldsymbol  x}_{j} \in S.$  In the neighbourhood of $\bar{\boldsymbol  x}_{j}$ the error of the derivative' approximate is:
\begin{eqnarray*} 
E_{{\boldsymbol  h}}^{(i)} (\bar{\boldsymbol  x}_{j})  = 
\dfrac{\partial^{i} }{\partial {\boldsymbol  h}^{i}}\left(T_{r, \bar{\boldsymbol  x}_{j}} ( {\boldsymbol  h})- g \left(  \bar{\boldsymbol  x}_{j}, {\boldsymbol  h} \right)   \right)\\
  = 
\dfrac{\partial^{i} }{\partial {\boldsymbol  h}^{i}}\left(T_{r, \bar{\boldsymbol  x}_{j}} ( {\boldsymbol  h})-   \dfrac{\sum_{ j=1}^{N} A_{j} (\bar{\boldsymbol  x }_{j}) Q_{   s,j}(\boldsymbol  h)P_{   r,j}(\boldsymbol  h) }{\sum_{ j=1}^{N} A_{j} (\bar{\boldsymbol  x}_{j}) Q_{s,j}(\boldsymbol  h)}
   \right) .
 \end{eqnarray*}
Considering $\theta_{j} =f\left(  \bar{\boldsymbol  x }_{j} \right)$ and Definition~\ref{def8}, we immediately have $b_{0}^{j}=f\left(  \bar{\boldsymbol  x }_{j} \right),$ so that $\lim_{h \to 0}E_{{\boldsymbol  h}}^{(i)} (\bar{\boldsymbol  x}_{j})=0.$  If $P_{   r,j}(\boldsymbol  h)$ is chosen to coincide with $T_{   r,\boldsymbol  x}(\boldsymbol  h),$  then $\lim_{h \to 0}E_{{\boldsymbol  h}}^{(i)} (\bar{\boldsymbol  x}_{j})=0, i=\overline{0,\nu}.$
Next, consider the DFS approximate of $T_{   r,\boldsymbol  x}(\boldsymbol  h) $  and its successive derivatives when $h\to 0,$ i.e.:
\begin{eqnarray*} 
E_{{\boldsymbol  h}}^{(i)} ( {\boldsymbol  x} )& =& T_{r, {\boldsymbol  x} }^{(i)} ( {\boldsymbol  h})  
- \displaystyle \lim_{h \to 0}\dfrac{\partial^{i} }{\partial {\boldsymbol  h}^{i}}  \dfrac{\sum_{ j=1}^{N} A_{j} (\bar{\boldsymbol  x }_{j}) Q_{   s,j}(\boldsymbol  h)P_{   r,j}(\boldsymbol  h) }{\sum_{ j=1}^{N} A_{j} (\bar{\boldsymbol  x}_{j}) Q_{s,j}(\boldsymbol  h)}, 
 \end{eqnarray*}
 ${\boldsymbol  x }={\boldsymbol  h }+{\boldsymbol  x}_{j}.$
As \eqref{eq13}  is equivalent to 
\begin{eqnarray} 
 \dfrac{ g^{(i) } }{i!}( {\boldsymbol  x}) &=& \dfrac{ \sum_{j=1}^{N} A_{j}  ( {\boldsymbol  x})    b_{i}^{j}}{ \sum_{j=1}^{N} A_{j}  ( {\boldsymbol  x}) } \label{eq14} \\
 && - \sum_{|k|=0}^{|i|-1}  \dfrac{ \sum_{j=1}^{N} A_{j}  ( {\boldsymbol  x})    a_{ {\boldsymbol i}-{ \boldsymbol k}}^{j}} { \sum_{j=1}^{N} A_{j}  ( {\boldsymbol  x}) } \left(
 b_{i}^{j} -  \dfrac{1}{{\boldsymbol k}!} g^{(k)}  ( {\boldsymbol  x})
 \right) \nonumber
\end{eqnarray}
Substituting \eqref{eq14} above, it becomes:
 \begin{eqnarray*}
 E_{{\boldsymbol  0}}^{(i)} ( {\boldsymbol  x} )& =& T_{r, {\boldsymbol  x} }^{(i)} ( {\boldsymbol  h})  
- {\boldsymbol  i}! \left(    \sum_{j=1}^{N} p_{j}   ( {\boldsymbol  x} )  b_{i}^{j} -  \right. \\
&& \left. 
\sum_{|k|=0}^{|i|-1}   \sum_{j=1}^{N} p_{j}   ( {\boldsymbol  x} )  a_{i-k}^{j}    \left(
 b_{i}^{j} -  \dfrac{1}{{\boldsymbol k}!} g^{(k)}  ( {\boldsymbol  x}) \right) \right)
  \end{eqnarray*}
 with $p_{j}   ( {\boldsymbol  x} ) = \dfrac{A_{j}  ( {\boldsymbol  x})  }{    \sum_{j=1}^{N}    A_{j}({\boldsymbol  x} ) }.$
 For the $N$ points $ \bar{\boldsymbol  x}_{j},$ we have that $\lim_{ h \to 0} g^{(i)}   \left( \bar{\boldsymbol  x}_{j},   {\boldsymbol  h}  \right) = b_{i}^{j}. 
 $
If $z_{i}^{j} = b_{i}^{j}$ then $T_{r, \bar{\boldsymbol  x}_{j}}^{(i)} ( {\boldsymbol  0}) = \lim_{h \to 0 } g^{(i)} \left( \bar{\boldsymbol  x}_{j},   {\boldsymbol  h} \right)  $ and the error is zero for the approximation of the $i$th derivative.

(ii)  $\bar{\boldsymbol  x}_{j} \notin S.$ Then $ E_{{\boldsymbol  0}}^{(i)} ( \bar{\boldsymbol  x}_{j} )=0.$  Hence
 \begin{eqnarray*}
T_{r, {\boldsymbol  x} }^{(i)} ( {\boldsymbol  h})  
& =& {\boldsymbol  i}! \left(    \sum_{j=1}^{N} p_{j}   ( {\boldsymbol  x} )  b_{i}^{j} -  \right. \\
&& \left. 
\sum_{|k|=0}^{|i|-1}   \sum_{j=1}^{N} p_{j}   ( {\boldsymbol  x} )  a_{i-k}^{j}    \left(
 b_{i}^{j} -  \dfrac{1}{{\boldsymbol k}!} g^{(k)}  ( {\boldsymbol  x}) \right) \right) + e^{(i)},
  \end{eqnarray*}
where $e^{(i)}$ is the error of the $i$th derivative approximation.
After some simple algebraic manipulation, we have: 
\begin{eqnarray}
 e^{(i)} &=& {\boldsymbol  i}!  \sum_{j=1}^{N} p_{j}   ( {\boldsymbol  x} ) \left(  b_{i}^{j} - f^{(i)} ( {\boldsymbol  x} )\right) \label{eqth1} \\
 &&- {\boldsymbol  i}!  \sum_{|k|=0}^{|i|-1}    \sum_{j=1}^{N} p_{j}   ( {\boldsymbol  x} )  a_{r}^{j}    \left( b_{i-r}^{j} -  \dfrac{1}{{\boldsymbol (i-r)}!} g^{(i-r)}  ( {\boldsymbol  x}) \right) .  \nonumber
\end{eqnarray}
Parameters $a_{r}^{j} , r=\overline{1,s},$ are the coefficients of $Q_{j}$ and are chosen to minimise error $ e^{(i)} $ and under the assumption $a_{0}^{j}=1.$  In the worst case, $a_{r}^{j} =0$ and $$\left|  e^{(i)}  \right| \le  {\boldsymbol  i}! \left|   p_{j}   ( {\boldsymbol  x} ) \left(  b_{i}^{j} - f^{(i)} ( {\boldsymbol  x} ) \right) \right|.$$
From the theory of fuzzy approximation \cite{ref25,ref31}, we finally have:
 $$\left|  e^{(i)}  \right| \le  {\boldsymbol  i}!    \sup_{\left\| \boldsymbol  \xi \right\| < {\boldsymbol  h}}   
  f^{(i+1)} ( {\boldsymbol { \xi  }} ) {\boldsymbol h }.
  $$

\end{proof}

\begin{theorem}\label{th2} 
Suppose   $N $   overlapped and equally distributed Fsets   assigned to each input variable of the DFS  \eqref{eq12}.   Then, for any given real   function $f (x)$ defined in $\cal{C}^{\nu}$ and an approximation error bound $\varepsilon  > 0 ,$ a DFS \eqref{eq12}  with disturbed functions of Lemma~\ref{lem1} exists,  that   guarantees:
\begin{itemize}
\item[(i)]  $g(x) =\displaystyle \lim_{h \to 0} g(x,h).$
\item[(ii)]  $\displaystyle \sup_{x \in Supp(f)}  \displaystyle \lim_{ h \to 0} \left\| f(x) - g(x,h) \right\| < \varepsilon .$
\item[(iii)]   $ \displaystyle \sup_{x \in Supp(f)}  \displaystyle \lim_{ h \to 0} 
\left\|  \dfrac{\partial^{i} f(x)}{\partial x^{i}}  -   \dfrac{\partial^{i} g(x,h)}{\partial x^{i}}  
\right\| 
   < \varepsilon  , i = \overline{1,   \nu} .$
\end{itemize}
 \end{theorem}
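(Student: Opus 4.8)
The plan is to prove the three claims in the order stated; (i) and (ii) are short, and (iii) is the substantive step that fuses Theorem~\ref{th1} with a Weierstrass-type density argument.

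For (i) I would simply evaluate \eqref{eq12} at ${\boldsymbol h}={\boldsymbol 0}$. By Definition~\ref{def8} one has $Q_{s,j}({\boldsymbol 0})=a^{j}_{0,\ldots,0}=1$ and $P_{r,j}({\boldsymbol 0})=b^{j}_{0,\ldots,0}=\theta_{j}$, so every disturbed membership function collapses to its static counterpart and \eqref{eq12} reduces term by term to the static center-average expression \eqref{eq6}; hence $\lim_{h\to0}g({\boldsymbol x},{\boldsymbol h})=g({\boldsymbol x})$, and by Lemma~\ref{lem1} this limit is attained continuously in ${\boldsymbol h}$.

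For (ii) I would invoke the classical fuzzy universal approximation property \cite{ref14,ref15,ref25,ref31}: with the choice $\theta_{j}=f(\bar{\boldsymbol x}_{j})$ the static FS $g$ of \eqref{eq6} interpolates $f$ at the nodes $\bar{\boldsymbol x}_{j}\in S$, and since each input carries $N$ overlapped, equally distributed, complete Fsets (Assumption~\ref{assump2}), refining the partition sends the mesh $\delta$ of \eqref{neighbourhood} to zero. Uniform continuity of $f$ on the compact set $Supp(f)$ then yields $\sup_{x}\|f(x)-g(x)\|<\varepsilon$ once $N$ is large enough; combining this with part (i) gives (ii).

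For (iii) the argument is local-to-global. First, by the Weierstrass Approximation Theorem — in the $\mathcal{C}^{\nu}$ form, e.g. via Bernstein polynomials, so that all derivatives up to order $\nu$ converge simultaneously — I would pick a polynomial $\Pi$ with $\sup_{x\in Supp(f)}\|f^{(i)}(x)-\Pi^{(i)}(x)\|<\varepsilon/2$ for $i=\overline{0,\nu}$. On the cell around each node $\bar{\boldsymbol x}_{j}$ I then choose the output polynomial $P_{r,j}({\boldsymbol h})$ to coincide with the Taylor polynomial $T_{r,\bar{\boldsymbol x}_{j}}({\boldsymbol h})$ of $\Pi$ about $\bar{\boldsymbol x}_{j}$ (taking $r$ no smaller than $\deg\Pi$ so that $P_{r,j}\equiv\Pi$ near that node), i.e. $b^{j}_{\boldsymbol d}=z_{\boldsymbol d}(\bar{\boldsymbol x}_{j})=\tfrac{1}{{\boldsymbol d}!}\,\Pi^{(\boldsymbol d)}(\bar{\boldsymbol x}_{j})$, and select the denominator coefficients $a^{j}_{r}$ of $Q_{s,j}$ as in the proof of Theorem~\ref{th1}. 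Theorem~\ref{th1}(i) then forces $g$ and all its derivatives up to order $\nu$ to agree with $\Pi$ and its derivatives exactly at $\bar{\boldsymbol x}_{j}$, while Theorem~\ref{th1}(ii) bounds the $i$th-derivative error at a generic ${\boldsymbol x}=\bar{\boldsymbol x}_{j}+{\boldsymbol h}$ by ${\boldsymbol i}!\sup_{\|{\boldsymbol\xi}\|<\|{\boldsymbol h}\|}\|\Pi^{(i+1)}({\boldsymbol\xi})\|\,\|{\boldsymbol h}\|=O(\delta)$, uniformly in $j$ since $\Pi$ has bounded derivatives on the compact $Supp(f)$. Choosing $N$ large enough that this is $<\varepsilon/2$ and then applying the triangle inequality $\|f^{(i)}-\lim_{h\to0}g^{(i)}\|\le\|f^{(i)}-\Pi^{(i)}\|+\|\Pi^{(i)}-\lim_{h\to0}g^{(i)}\|<\varepsilon$ delivers (iii).

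The main obstacle I anticipate is controlling the error of the derivatives of the \emph{rational} DFS \eqref{eq12} uniformly across cells. Theorem~\ref{th1} is phrased through the ${\boldsymbol h}$-derivatives at ${\boldsymbol h}\to0$, whereas (iii) asks for $\partial^{i}/\partial{\boldsymbol x}^{i}$, so one must argue that at the limit near a node $A_{j}$ is close to $1$ with controlled derivatives, and in the overlap region (where up to $2^{n}$ rules fire, cf.\ Remark~\ref{rem3}) the weights $p_{j}({\boldsymbol x})$ and the denominator polynomials $Q_{s,j}$ do not amplify the local errors — Theorem~\ref{th1}'s proof disposes of the latter by the worst case $a^{j}_{r}=0$, but assembling the single-cell estimates into a genuinely uniform bound over $Supp(f)$, and doing so without needing more than the smoothness that the Weierstrass step already buys, is the delicate part of the argument.
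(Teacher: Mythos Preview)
Your proposal is correct and follows essentially the same three-step architecture as the paper's own proof: (i) collapse \eqref{eq12} to \eqref{eq6} via Lemma~\ref{lem1}/Definition~\ref{def8}, (ii) invoke the classical fuzzy universal-approximation results, and (iii) insert a Weierstrass polynomial as an intermediate, apply Theorem~\ref{th1} to bound the DFS--polynomial gap, and finish with the triangle inequality. The only substantive deviation is that the paper takes the local Taylor polynomial of $f$ itself as the intermediate (so its final bound reads $2\,{\boldsymbol i}!\sup_{\|\boldsymbol\zeta\|<\delta}|f^{(i+1)}(\boldsymbol\zeta)|\,\delta$), whereas you first pass to a single global $\mathcal{C}^{\nu}$--Bernstein polynomial $\Pi$; your choice is a mild technical improvement since it supplies uniform control of the $(i{+}1)$st derivative without tacitly assuming $f\in\mathcal{C}^{\nu+1}$, but the underlying strategy is the same.
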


\begin{proof} 
(i) Considering Definition~\ref{def1} and \ref{def2}  for additive disturbed functions and  the nonlinear translation of Fsets, as well as Lemma~\ref{lem1}, then $\lim_{h\to 0} A_{h}(x)=A(x)$ and $\lim_{h\to 0} B_{h}(x)=B(x),$ which leads \eqref{eq12} to converge to \eqref{eq6} when $\boldsymbol h \to 0.$ 

(ii) From (i), immediately we can write $$\left\| f(x) -\lim_{h \to 0} g(x,h)\right\| = \left\| f(x) - g(x)\right\|.$$  The question of universal approximation has been addressed by many authors \cite{ref25,ref31}.  It has been demonstrated that given $f \in \cal{C}$  then a FS exists with an input-output relationship $g$ such that $\left\|  f -g \right\|_{w} \le \varepsilon ,$ with $\left\| \cdot \right\|_{w} $ the Lebesgue norm of any order $w$ and $\varepsilon \in \mathbb{R}_{0}^{+}.$ 

(iii) Consider 
$f \in \cal{C}(\mathbb{R}),$  $x \in U,$
$N_{\delta}  $ as in \eqref{neighbourhood}.  Then, by Weierstrass Approximation Theorem and\cite{ref32}, a polynomial 
  always exists on   $N_{\delta}$ that approximates $f({\boldsymbol  x})$ to an arbitrary accuracy.  Consider such polynomial as the first terms of a TS of $f({\boldsymbol  x})$  in   the  neighbourhood $N_{\delta},$ $T_{\nu, {\boldsymbol  x}} ( {\boldsymbol  h})$  as in \eqref{eq9}.
\begin{eqnarray}
\left\|  f( {\boldsymbol  x}+ {\boldsymbol  h})  - T_{\nu, {\boldsymbol  x}} ( {\boldsymbol  h}) \right\|_{\infty}  &< & \varepsilon_{0}   \label{eq17a}\\
 \left\|  f^{(i)}( {\boldsymbol  x}+ {\boldsymbol  h})  - T^{(i)}_{\nu, {\boldsymbol  x}} ( {\boldsymbol  h}) \right\|_{\infty} & < & \varepsilon_{1} ,\quad  i=\overline{1,\nu}, \label{eq17b}
 \end{eqnarray}
where $ \displaystyle \varepsilon_{0} \le  \sup_{\left\|\boldsymbol  \zeta \right\| < \delta} \dfrac{d f (\boldsymbol \zeta )}{dh} \delta $ and $ \displaystyle \varepsilon_{1} \le  {\boldsymbol i}! \sup_{\left\|\boldsymbol  \zeta \right\| < \delta} \dfrac{d f^{(i+1)} (\boldsymbol \zeta )}{dh^{i+1}} \delta .$
From \eqref{eq17a}--\eqref{eq17b} and 
\begin{eqnarray*} 
\displaystyle \left| \dfrac{\partial^{i} f(\boldsymbol x)}{ \partial \boldsymbol x^{i}} - \lim_{h\to 0}\dfrac{\partial^{i} g(\boldsymbol x, \boldsymbol  h)}{ \partial \boldsymbol h^{i}}  \right|  \le 
\left| \dfrac{\partial^{i} f (\boldsymbol x)}{ \partial \boldsymbol x^{i}} - \dfrac{\partial^{i} T_{\nu, \boldsymbol x}(  \boldsymbol  h)}{ \partial \boldsymbol h^{i}}  \right| \\
 +
\left| 
\dfrac{\partial^{i} T_{\nu, \boldsymbol x}(  \boldsymbol  h)}{ \partial \boldsymbol h^{i}}  - \lim_{h\to 0}\dfrac{\partial^{i} g(\boldsymbol x, \boldsymbol  h)}{ \partial \boldsymbol h^{i}}  \right| 
 \end{eqnarray*}
 we conclude that 
 \begin{equation*} 
\displaystyle \left| \dfrac{\partial^{i} f(\boldsymbol x)}{ \partial \boldsymbol x^{i}} -\displaystyle \lim_{h\to 0}\dfrac{\partial^{i} g(\boldsymbol x, \boldsymbol  h)}{ \partial \boldsymbol h^{i}}  \right|  \le  2 {\boldsymbol i}! \sup_{\left\|\boldsymbol  \zeta \right\| < \delta} \dfrac{d f^{(i+1)} (\boldsymbol \zeta )}{dh^{i+1}} \delta .
 \end{equation*}
\end{proof}

\section{Fuzzy  Taylor Series ODE Method}\label{IV}

A continuous autonomous stationary and nonlinear dynamic system can be described by a set of ODEs 
\begin{equation}\label{eq18} 
\dfrac{dx(t)}{dt} = F\left( x(t) \right), 
\end{equation}
where $x(t)$ is the vector of the system states and $ F$   the system vector field.   \cite{ref33} has proven that such a system can be well described in an Euclidean reconstruction state-space by means of a static mapping $F $ which transforms $n$ past values of a sampled observable  $x_{k} = x(kT) $  into the next future samples, i.e.,  $x_{ k+1}= f \left( 
x_{ k}, x_{ k-1},\ldots , x_{ k-n} \right) .$

Considering the solution of the   initial value problem,  \eqref{eq18}  together with $x(t_{0}) = x_{ 0}$, one may expand  a TS around $ t_{0} $ and   obtain  a local solution which is valid within its radius of convergence $R_{0}.$  Once the series is evaluated at $t_{1} < R_{0},$ one obtains an approximation for $x(t_{ 1}).$  The solution may  then be extended to   point $t_{ 2},$ and so forth, so that by a process of “analytical continuation” one obtains a piecewise polynomial solution to \eqref{eq18}.    Whenever the derivatives of $x$ can be easily obtained (analytically or numerically), the TS method offers several advantages over other methods.  Namely,   it provides more information than other methods --- this includes derivatives' information, local radius of convergence, and the location of the poles in the  complex plane ---, another advantage is that both the step-size and the order can be easily changed, so that   optimal values  can always be chosen. Finally, TS integration provides a piecewise continuous solution to the ODE, having no need   to interpolate at intermediate points. The simplest one-step method  of order $\nu $ is based on the TS expansion of solution $x(t).$
  Considering $x^{(i+1)} (t), i=\overline{0,  \nu},$   continuous on $[a,b],$ then the Taylor's formula gives: $$x(t_{k+1}) =x(t_{k}) + \sum_{i=0}^{\nu} f^{(i)} \left( t_{k}, x(t_{k})\right)  \dfrac{h^{i}}{i !} + h \mathcal{O}(h^{\nu  }),$$
where $\mathcal{O}(h^{\nu +1})= h \mathcal{O}(h^{\nu }) = f^{(\nu +1) } \left(  \xi_{k} , x(\xi_{k} )\right) \dfrac{h^{ \nu +1 }}{(\nu +1)!},$ $ t_{k} \le   \xi_{k} \le t_{k+1}  , $ and the derivatives of $f$ are defined recursively as 
$$f ^{(i)}(t , x) = f _{t}^{(i-1)}(t , x) + f_{ x}^{(i-1)}(t , x) f (t , x), i=1,2, \ldots$$ 
This result leads to a family of methods known as the TS methods, whose   fuzzy version is  given in   the   algorithm stated in Subsection~\ref{ODETFS}.

\subsection{Fuzzy Taylor  ODE  Solver}\label{ODETFS}
To obtain an approximate solution of order $\nu $ to   ODE   \eqref{eq18} on $[a, b] ,$ let $h =(b-a)/n$ and generate, thus,  the sequences:
\begin{eqnarray} 
x(t_{k+1})  \approx   x(t_{k }) + \hspace*{5cm} \label{eq19} \\
+ \left( g \left(  t_{k} , x(t_{k})\right)+ \cdots + g^{(\nu -1)} \left(  t_{k} , x(t_{k})\right)  \dfrac{h^{(\nu-1)}}{(\nu -1)!}\right) h, 
\nonumber \end{eqnarray}
where $t_{0}=a, t_{k+1}=t_{k}+h,k=\overline{0, n-1}, $    functions $g^{(i)}, i=\overline{0,\nu-1}$    are the DFS approximate of $f $   and its successive first derivatives.

To solve   ODE  \eqref{eq18} using  \eqref{eq19},  it is necessary to estimate (either analytically or numericallly)   the derivatives' values of the dynamical system at each observable sample point $x_{ k}.$ With these values, a DFS can be created  in order to be used in the prediction problem. The result is a FS (based on linguistic representation structure) that describes the time-series, as well its derivatives.

With a set of $N_{ p}= \nu \times N $   points of the time-series ($N$ and $ \nu$ as in Definition~\ref{def8} and below), where we know the values of
the local TS terms (just to the $\nu$th order continuous derivative), a multivariate rational approximation is used to identify the coefficients of the polynomials.

\begin{definition}\label{def9}
Consider the TS expansion of $f (x_{k}) ,$  $x_{ k} \in U,$
\begin{equation}\label{eq20} 
f (x_{k}+h) = \sum_{|t|=0}^{\infty} c_{t} (x_{k}) h^{t}.
\end{equation}
The DFS approximate $g (x , h) $  is   a fuzzy rational  function of degree $\nu = r + s$  in the numerator and $ s $ in the denominator  as in \eqref{eq12}, and whose power series expansion agrees with a power series to the highest possible value of $ f.$   Hence $g (x , h)$ is said to be a fuzzy  disturbed approximate to   series \eqref{eq20} if
$g(x,0)\approx f(x) $
 and also 
$ \left.  \dfrac{\partial^{i} g(x+h) }{\partial h^{i}  }
  \right|_{h=0} \approx   \left.  \dfrac{\partial^{i} f(x+h) }{ \partial x^{i}  }  \right|_{h=0} , 0 \le i \le r.$
 The errors, ${\cal E}_{i} ,$ of these approximations are as in \eqref{eqth1}.
\end{definition}
This problem can be seen as an   optimisation problem.  That is, find parameters   $\boldsymbol a$ and $\boldsymbol b$ that minimise the   sequence of functions:
\begin{equation}\label{}
\min_{\boldsymbol{a,b}}   \sum_{k} \left\|   {\cal E}_{i}  (x_{k})    \right\|_{2}, \quad 
  i=\overline{0, r},
\end{equation}
where\begin{eqnarray*}\label{}
{\cal E}_{i} (x_{k})&:= &  \sum_{j=1}^{N} p_{j}(x_{k}) \left( b_{i}^{j} - f^{(i)} (x_{k}) \right)   \\
&&   -\sum_{|r|>0}^{|i|}
p_{j}(x_{k}) a_{r}^{j} \left( b_{i-r}^{j}  -\dfrac{1}{(i-r)!} g^{(i-r)} (x_{k}) \right).   \nonumber
\end{eqnarray*}
The solution to this problem can be implemented using the following algorithm.

\subsection*{DFS Learning Algorithm--DFSLA}\label{}
Let $S$ be the training data set. For each point $x \in S,$ we have the value of the function and its $r$ derivatives, 
$C= \left\{  f^{(i)} (x),  i=\overline{0,  r }\text{ and } x \in S\right\}.$ 

\begin{description} 
\item[Step-1]  Choose $N$ appropriate points  ${\bar{ \boldsymbol x}}_{j}  \in S  .$  These are the centres of the fuzzy input membership functions, i.e.,     $A_{j} ( \bar{  \boldsymbol x}_{j} ) =1.$ 
Parameters 
$ {\boldsymbol b_{i}}^{j}: = f^{(i)}  ( {\bar{ \boldsymbol x}}_{j}   ) ,$  $i= \overline{0, r}$ and $j=\overline{0,N}.$\\
\item[Step-2]   Find    parameters     $\boldsymbol a_{i}$ that approximate  the following relationship in mean square sense:

$ \sum_{j=1}^{N} p_{j} (x)a_{i}^{j} \left( b_{0}^{j} -g(x)\right) =    {\cal E}_{r}  (x).$ \\
\item[Step-3] $i \leftarrow i+1;$ if $i \le r$ go to Step-2 else End.
\end{description}
Step-1 is the solution of the zero order approximation problem of function $f.$ In this case, $a_{0}$ is a vector of unity value. The combination of the DFLSA algorithm with the ODE Taylor Fuzzy solver Algorithm is here designed as ODE-DFS algorithm.

\section{Numerical Example}\label{V}
Find a function  $f : \mathbb{R}^{n} \times  \mathbb{R}^{m}  \to  \mathbb{R}$  to obtain an estimate of $x$ at time $k + h,$ from the $n$ past time steps:
\begin{equation}\label{eq25} 
x ( k + h ) = f _{h} \left( y ( k ) , u ( k ) \right) ,
\end{equation}
where $u(k) \in \mathbb{R}^{m}$  is an independent variable vector, assumed to be known, and     $y(k):=\left(x(k), \ldots, x(k-n) \right)^{T}.$

If   function $f (y) \in {\cal C(\mathbb{R})}$ in \eqref{eq25} can be written as  \eqref{eq19}, where the DFS model is used to represent the nonlinear derivatives' functions of   dynamic systems with   nonlinear autoregressive exogenous input (NARX) structure. The  disturbed fuzzy model identified by the DFSLA algorithm was used in the ODE Taylor fuzzy solver to
iteratively generate the model output $\hat{x}(k).$ Given the same initial condition of the real model, this method was used  to   generate iteratively the model output $\hat{x}(k+1),$   for the input $\hat{y}(k)$ where the past system output terms $x(\cdot)$ were
replaced by model predictions  $ \hat{x} (\cdot).$  This approach has been evaluated for the problem of Mackey-Glass chaotic time-series prediction.
\subsection{Mackey-Glass chaotic time-series}\label{}
The Mackey-Glass time-series has been widely used as a standard benchmark to assess prediction algorithms.  This time-series is generated by integrating the delay differential equation, $x'(t)=f(y(t)),$ where $$f(y(t))=a x (t -\tau) \left( 1+x^{c}(t-\tau)\right)-bx(t)$$ and $y(t)=\left( x(t) - x (t -\tau)  \right)^{T}.$  With $c=10,$ $a=0.2,$ $b=0.1$ and $\tau = 17,$ the time-series is chaotic, exhibiting a cycle but not periodic behaviour.  The upper order time derivatives of the state variable  $x(t)$ can be defined recursively as: $$x''(t)=f'_{x(t)}x'(t)+f'_{x(t-\tau)}x'(t-\tau)$$ and $$x'''(t)=f'_{x(t)}x''(t)+f''_{x(t-\tau)}x'(t-\tau)^{2}+  f'_{x(t-\tau)}x''(t-\tau). $$

The numerical solution of the ODE is obtained by the fourth-order Runge-Kutta method with time step $0.1,$ initial condition $x(0) = 1.2$ and assuming $ x(t) = 0,   t < 0 .$ The generated  time-series
has  1000 data points, 500 of which were used as training patterns and the other 500 as test data. To build the fuzzy model of Mackey–Glass time Series four variables, $x(t -18) , x(t-12) , x(t-6) , x(t),$ were 
selected as   input variables of the fuzzy model. The interval of these input variables, $[0.40,1.32],$ was partitioned with 3 triangular membership functions. From a total of $3^{4}$ possible rules, we select the $61$ fuzzy rules  more fired to describe the fuzzy model. The DFS model had $r =3$ and $s = 3,$ with capabilities of approximating  $ f  $   until the 3rd derivative term, which was used in the ODE-DFS algorithm.

The results of computational simulation are given in Fig.~\ref{figure1}. The top figure   shows the comparison between the output of the ODE-DFS model and the training points, where the blue line represents the training points and the black line represents the series forecasting. The bottom figure shows the diagram error between the output of ODE-DFS model and the training points.

\begin{figure}[ht]
\begin{center}
\includegraphics[width= 0.5\textwidth]{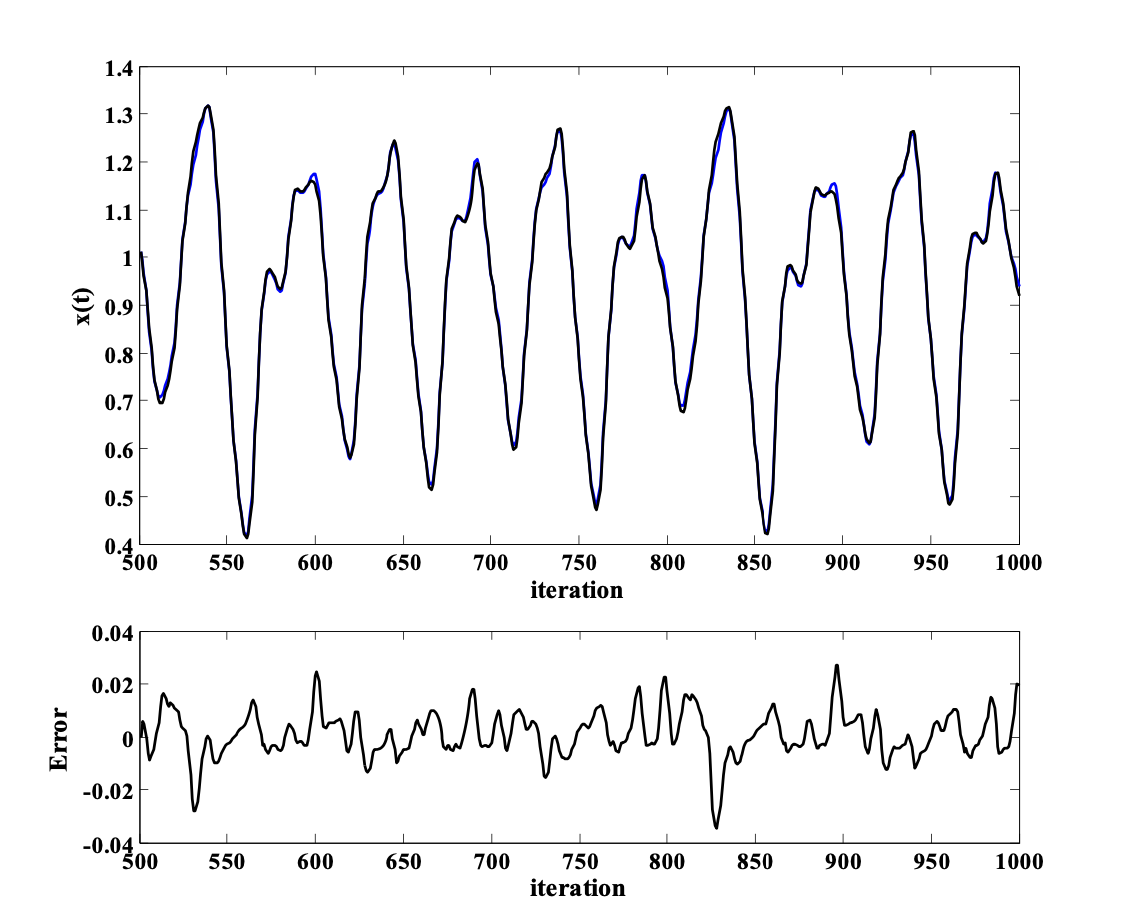}
\end{center}
\caption{Top: Prediction output of   ODE-DFS model (black line) and the testing points (blue line); Bottom: Residual prediction errors.}\label{figure1}
\end{figure}

\begin{table}[ht]
\begin{center}
\includegraphics[width= 0.5\textwidth]{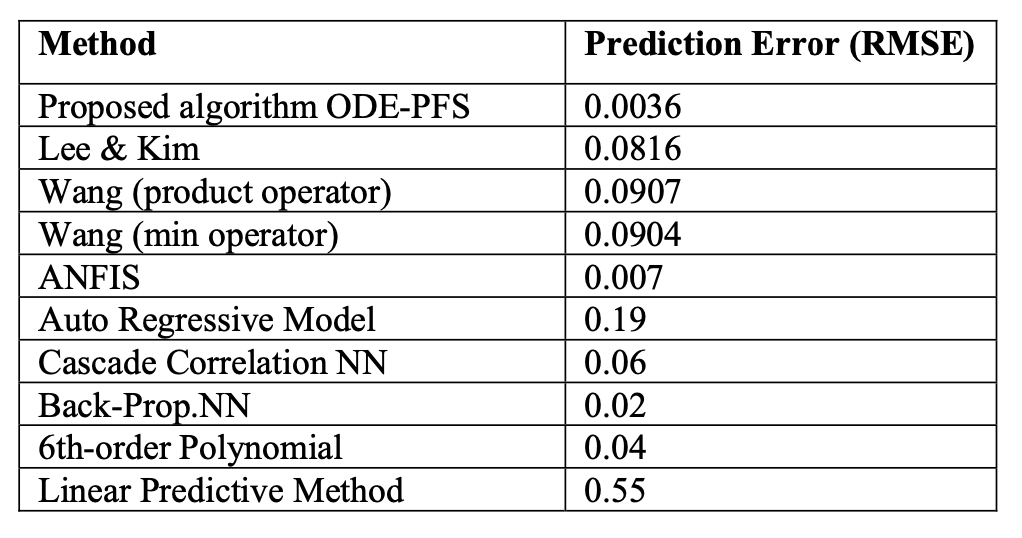}
\end{center}
\caption{Comparison Results of RMSE  among various FPs.}\label{table1}
\end{table}  

\begin{figure}
\begin{center}
\includegraphics[width= 0.5\textwidth]{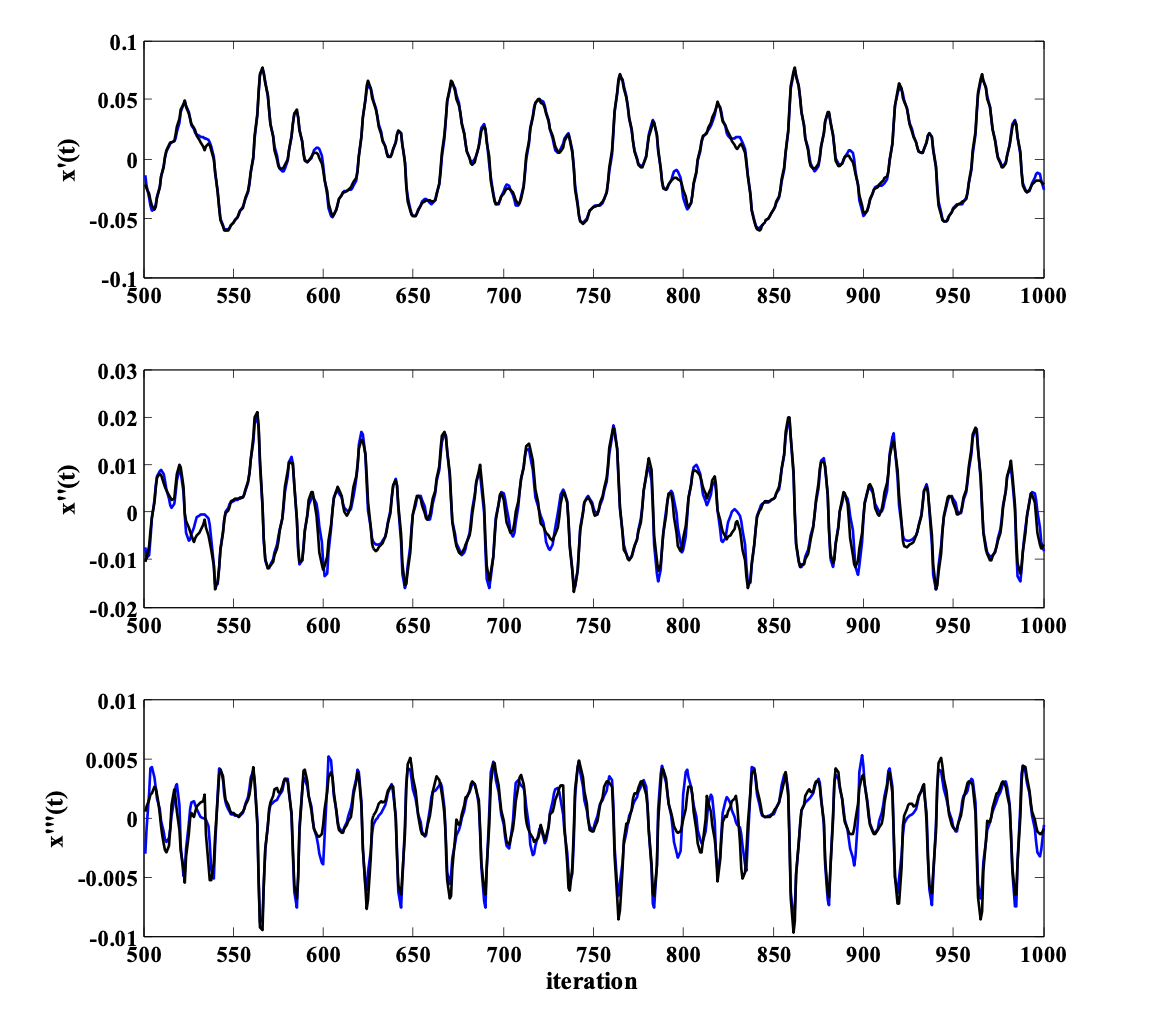}
\end{center}
\caption{Derivatives' time-series $x'(t) , x''(t)$ and $x'''(t)$ (blue line – real series; black line – DFS series forecasting).}\label{figure2}
\end{figure}

Table~\ref{table1} reports the comparison of our fuzzy prediction method with other prediction methods, using the same data \cite{ref35}. For this example, it can be seen that  the performance of the presented method is superior to all the others listed on the table. Even so, our proposed method does not realise the optimisation of the partition of the input  space. This methodology provides also the forecasting of the derivatives' time-series. Fig.~\ref{figure2}  represents the derivative time-series $x'(t) , x''(t)$ and $x'''(t) ,$ whose respective approximation MSE errors are $8.964\times 10^{-7}, 2.353\times 10^{-6}$  and $9.713\times 10^{-7}.$  

\section{Conclusions and future work}\label{VI}
This paper presents a new methodology for prediction of complex discrete time systems or time-series. The approach is based on a  DFS, combined with the ODE Taylor method.
The DFS is a generalisation of the traditional FS that can incorporate relationships of the series in its linguistic fuzzy. In this way, DFS is capable of approximating regular functions, as well as the derivatives up to a given order, on compact domains. 
When the DFS model and its derivatives' models are combined with the ODE Taylor method, the result is  a capable   algorithm to solve forecasting problems.  This methodology was tested with the  Mackey-Glass chaotic time-series forecasting problem.   
Comparative studies were carried out with other fuzzy and neural network predictors that    suggest that our approach can offer comparable or even better performance.
From our study, we conclude that efficiency in accurate and robust forecasting cannot rest solely on a good algorithm. For this reason,  the method proposed in this work has the advantage of capturing and making explicit the derivatives of the time-series, which seems to us to be   a quite desirable feature.

To continue the work, further comparison studies  need to be done using 
 other benchmark examples.  
Also, the performance of Taylor methods of higher order need to be investigated. 

\section{Acknowledgements}\label{}
The authors would like to thank the reviewers for the valuable suggestions.
Work  financed by  FCT - Funda\c c\~ao para a Ci\^{e}ncia e a Tecnologia under project:  (i)  UIDB/04033/2020 for the first author.   (ii)  UIDB/00048/2020 for the second author.

\bibliography{MTNS2022BibFile}
\end{document}